\def\id{{\rm id}}\def\End{{\rm End}}
\newtheorem{proposition}{Proposition}
\newtheorem{corollary}{Corollary}
\newtheorem{theorem}{Theorem}
\newtheorem{lemma}{Lemma}
\theoremstyle{definition}
\newtheorem{definition}{Definition}
\newtheorem{question}{Question}
\newenvironment{bemerkung}
  {\pushQED{\qed}\remark}
  {\popQED\endremark}
\newenvironment{beispiel}
  {\pushQED{\qed}\example}
  {\popQED\endexample}
\begin{document}
\title{A universal enveloping algebra for
cocommutative rack bialgebras}
\author{Ulrich Kr\"ahmer and Friedrich Wagemann}
\begin{abstract}
We construct a bialgebra object in the category of linear 
maps $\mathcal {LM}$
from a cocommutative rack bialgebra. The
construction does extend to some
non-cocommutative rack bialgberas, as is
illustrated by a concrete example.  
As a separate result, we show that the Loday complex 
with adjoint coefficients embeds into the rack bialgebra 
deformation complex for the rack 
bialgebra defined by a Leibniz 
algebra. 
\end{abstract}

\maketitle

\tableofcontents

\section*{Introduction}
A \emph{shelf} is a set $X$
with a composition $(x,y)\mapsto x\lhd y$ which satisfies the 
self-distributivity relation
$(x\lhd y)\lhd z=(x\lhd z)\lhd(y\lhd z)$.
More generally, this condition makes sense for 
coalgebras in braided
monoidal categories, as was observed by
Carter-Crans-Elhamdadi-Saito \cite{CCES} and further
studied by Lebed  
\cite{Leb}. In the present article, we will focus on
shelves in the category of vector spaces with the
tensor flip as braiding:
 
\begin{definition}
A \emph{linear shelf} is a 
coassociative coalgebra 
$(C,\triangle)$ together with a morphism of coalgebras 
$$
	C \otimes C \rightarrow C,\quad
 	(x,y)\mapsto x \lhd y
$$ 
that satisfies
\begin{equation}\label{selfdistributive}
	(x\lhd y)\lhd z=
	(x\lhd z_{(1)})\lhd(y\lhd z_{(2)})
	\quad \forall	x,y,z \in C.
\end{equation}
A counital and coaugmented 
linear shelf $(C,\triangle,\lhd,\epsilon,1)$ for which 
$$
	x\lhd 1=x,\quad 
	1\lhd x=\epsilon(x)1,\quad
	\epsilon (x \lhd y) = 
	\epsilon(x)\epsilon(y) 
$$ 
holds for all
$x,y \in C$ will be called a
\emph{rack bialgebra}.
\end{definition}

Here and elsewhere, 
all vector spaces, coalgebras etc will be
over a field $k$, and we use Sweedler's notation 
$\triangle (z)=z_{(1)}
\otimes z_{(2)}$ for coproducts.

If $C$ is spanned by primitive elements
(together with the coaugmentation), the
definition of a rack bialgebra reduces
to that of a Leibniz algebra \cite{LP}. 
Lie racks provide another 
natural source of examples 
with a rich structure theory and
applications in the deformation quantisation of duals of Leibniz
algebras, see 
\cite{ABRW}, \cite{ABRWI}. 

An important step in the theory of
Leibniz algebras was the definition of
their universal enveloping algebras
\cite{LPneu}. Here, we extend this
construction to
all cocommutative rack bialgebras. The
result is a cocommutative bialgebra
$U(C)$ in the category of vector spaces,
see
Theorem~\ref{theorem_braided_Leibniz}
below. 

One application of universal enveloping
algebras is to express co\-ho\-mo\-logy
theories as derived functors.  
Our motivation for the present article
was the article \cite{CCES}. Therein,
the authors develop a deformation theory of 
linear shelves. 
To this end, they defined cohomology groups 
$H^n_\mathrm{sh}(C,C)$ for $n \le 3$. 
However, it remained an open question
how to extend this to a fully fledged
cohomology theory including an
interpretation as derived functors in an
abelian category. In \cite{Alissa}, this
was further studied in the special case
where $\lhd$ is also associative.

The present article is meant as a first
step towards a possible answer to this
question. Our initial goal was to apply 
our results from \cite{KW}:
therein, we constructed examples of rack
bialgebras from Hopf
algebras in Loday-Pirashvili's
category of linear maps $\mathcal {LM}$ \cite{LP}. 
Therefore, Gerstenhaber-Schack cohomology \cite{GS} 
in the tensor category $\mathcal{LM}$ could be used to define 
$H^n_\mathrm{sh}(C,C)$ if all linear
shelves arose in this way. 

Let us describe the content of the present article
section by section.
Section~1  
contains preliminaries on coalgebras and 
points out that counitisation does not
provide an equivalence between linear
shelves and rack bialgebras. 
Section 2 recalls from \cite{KW} the 
construction of rack bialgebras 
from Hopf algebras. 
Section 3 introduces the notion of a
Yetter-Drinfel'd rack which guides 
the construction of $U(C)$ in
the main subsequent Section 4. 
Therein, we define $U(C)$ and establish
its universal property. We also
reformulate these results in terms of 
a bialgebra object in 
${\mathcal L}{\mathcal M}$ and give some
examples. The construction of $U(C)$
does extend also to some
non-cocommutative rack bialgebras. This
is demonstrated with an explicit example 
in Section 5. 
In Section 6, we describe this example
as a deformation of a cocommutative rack
bialgebra. 
Furthermore,  
the Loday complex 
with adjoint coefficients is embedded 
into the deformation complex
of the rack bialgebra associated to a
Leibniz algebra. The article 
concludes with an outlook and some open
questions.   

\subsection*{Acknowledgements}
We thank Alissa Crans for interesting
discussions that motivated us to work on
the topic of this paper.




\section{Rack bialgebras and linear shelves
are not equivalent}
One might expect that linear shelves and rack
bialgebras are related by
counitisation. We begin by
pointing out that this is not the case, so
when developing cohomology or deformation
theories, one must be clear 
which of the two structures one is studying.   
The construction from \cite{KW}
inevitably yields rack bialgebras, 
hence these are the objects we will
focus on afterwards. 

More precisely, recall that if a coalgebra 
has a counit 
$$ 
	\epsilon \colon C \rightarrow k,\quad 
	\epsilon(c_{(1)})c_{(2)}=\epsilon(c_{(2)})c_{(1)}=c
\quad\forall c \in C,
$$
and is in addition coaugmented,
i.e.~has a distinguished group-like element  
$$
	1 \in C,\quad 
	\triangle (1)=1 \otimes 1, 
$$
then the vector space 
$\check C:=\mathrm{ker}\, \epsilon$ becomes a
coalgebra with coproduct 
$$
	\check \triangle (c):=\triangle(c) -
	1 \otimes c-c \otimes 1, 
$$ 
or, in Sweedler notation, 
$$
	c_{{(\check 1)}} \otimes c_{{(\check 2)}} =
	c_{(1)} \otimes c_{(2)}-1 \otimes c 
	-c \otimes 1.
$$
Furthermore, the map 
$c \mapsto (\epsilon(c)1,c-\epsilon (c)1)$  
splits $C$ 
canonically into a direct sum $C=k 1 \oplus 
\check C$. This shows that 
$C \mapsto \check C$ is 
an equivalence
between the category of counital and coaugmented
coalgebras and the category of all
coalgebras, with
inverse $C \mapsto \hat C := k \oplus C$ and
$\hat\triangle(x) = \triangle(x) + 1 \otimes
x + x \otimes 1$. 

If $C$ is a rack bialgebra, then
$\lhd$ does restrict to $ \check C = 
\mathrm{ker}\, \epsilon$, but it is in general
\emph{not} self-distributive with respect to 
$\check \triangle$, so $\check C$ does \emph{not} become a
linear shelf in its own right. 
Conversely, if $C$ is a linear shelf, then
$\hat C$ is in general not a rack bialgebra
with respect to $\hat\triangle$.  
In fact, we have:

\begin{proposition}
The counitisation functor does not lift
to an equivalence from the category
of linear shelves to the category of rack
bialgebras. 
\end{proposition}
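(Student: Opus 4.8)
The plan is to show that the obstruction is already visible at the level of objects: any equivalence lifting the counitisation functor would, by commutativity with the forgetful functors to coalgebras, have to induce the coalgebra equivalence $C \mapsto \hat C$ on underlying coalgebras, with inverse $B \mapsto \check B$. Hence it would in particular turn every linear shelf $C$ into a rack bialgebra supported on $\hat C$ and extending $\lhd$, and every rack bialgebra $B$ into a linear shelf supported on $\check B$ and restricting $\lhd$. I would therefore reduce the statement to exhibiting a single linear shelf whose counitisation fails to be a rack bialgebra.

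For the heart of the argument I would first observe that on $\hat C = k1 \oplus C$ the rack bialgebra axioms $x \lhd 1 = x$ and $1 \lhd a = \epsilon(a)1$ together with bilinearity force a \emph{unique} extension of the shelf operation, namely $(\alpha 1 + x)\lhd(\beta 1 + y) = \alpha\beta 1 + \beta x + x\lhd y$ for $x,y\in C$; the counit condition $\epsilon(a\lhd b)=\epsilon(a)\epsilon(b)$ guarantees $x \lhd y \in \ker\epsilon = C$, so there is no freedom to repair the structure. I would then test self-distributivity on $\hat C$ for $x,y,z \in C$, inserting $\hat\triangle(z) = z_{(1)}\otimes z_{(2)} + 1\otimes z + z\otimes 1$. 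The coassociative part $z_{(1)}\otimes z_{(2)}$ reproduces exactly the shelf relation, whose right-hand side $(x\lhd z_{(1)})\lhd(y\lhd z_{(2)})$ equals $(x\lhd y)\lhd z$, while the two cross terms involving $1$ contribute $x\lhd(y\lhd z) + (x\lhd z)\lhd y$. Thus $\hat C$ is a rack bialgebra if and only if the Leibniz-type identity $x\lhd(y\lhd z) + (x\lhd z)\lhd y = 0$ holds on $C$.

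Since this identity is not a consequence of the shelf axioms, I would finish with an explicit counterexample. Taking any set-theoretic shelf $X$ and linearising it to $C = kX$ with $\triangle(x) = x \otimes x$ yields a linear shelf, and here both $x\lhd(y\lhd z)$ and $(x\lhd z)\lhd y$ are basis vectors; choosing $X$ to be the conjugation quandle $x\lhd y = y^{-1}xy$ of a non-abelian group such as $S_3$, one finds elements for which these two basis vectors are distinct, so that their sum is nonzero over any field and the identity fails. Hence $\hat C$ is not self-distributive, it is not a rack bialgebra, and no equivalence as in the statement can exist. The same conclusion follows independently from the already noted failure of $\check B$ to be self-distributive with respect to $\check\triangle$.

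The step I expect to be the main obstacle is not this computation but the reduction in the first paragraph: one must argue that a lift of the counitisation \emph{equivalence} is genuinely pinned down on underlying coalgebras and on the operation $\lhd$, so that the failure of the unique candidate structure rules out \emph{all} lifts rather than merely the obvious one. Making precise what a lift is — commutativity with the forgetful functors to coalgebras together with compatibility with $\lhd$ — is where the care lies; once this is fixed, the forcing of the extension above and the counterexample close the argument.
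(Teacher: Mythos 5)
Your computation in the second and third paragraphs is correct: on $\hat C$ the axioms $x\lhd 1=x$, $1\lhd a=\epsilon(a)1$ and $\epsilon(a\lhd b)=\epsilon(a)\epsilon(b)$ do force a unique \emph{extension} of a given product on $\check{\hat C}$, self-distributivity of that extension is equivalent to the identity $x\lhd(y\lhd z)+(x\lhd z)\lhd y=0$ on $C$, and this fails for the linearised conjugation quandle of $S_3$. But this only proves the informal remark that precedes the proposition in the paper (``$\hat C$ is in general not a rack bialgebra with respect to $\hat\triangle$''), not the proposition itself. The gap is exactly the one you flag and do not close: a lift of the counitisation functor is pinned down on underlying coalgebras, but nothing forces the rack product of $F(C)$ to restrict to the original $\lhd$ on $C=\check{F(C)}$. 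Worse, your chosen counterexample is precisely where this assumption is provably false: for a linear shelf spanned by group-likes (such as $kX$ for a conjugation quandle $X$), the paper's Example~\ref{erstesbsp} equips $\hat C$ with the rack product $(1+x)\blacktriangleleft(1+y)=1+(x\lhd y)$, which restricts on $\check{\hat C}$ to $x\blacktriangleleft y=x\lhd y-x\neq x\lhd y$ and \emph{does} make $\hat C$ a rack bialgebra, functorially in $C$. So ruling out the naive candidate rules out nothing for this class of shelves, and your argument cannot be completed along these lines.

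The paper avoids this by using an invariant that is preserved by any equivalence commuting with the underlying-coalgebra functors, with no assumption on how the product transforms. Restricting to the fibre over coalgebras with vanishing coproduct: on the shelf side there is (besides the zero object) a unique object admitting no morphism with nonzero kernel and nonzero image, namely the one-dimensional shelf with $\lhd=0$ (every other object surjects onto the nonzero quotient $C/\mathrm{im}\,\lhd$, or onto a codimension-one quotient when $\lhd=0$); on the rack-bialgebra side the corresponding fibre consists of counitised Leibniz algebras and therefore contains the counitisations of all simple Lie algebras, which admit no nontrivial proper quotients. These fibres cannot be equivalent. To repair your proof you would need an invariant of this kind rather than a computation with one candidate structure.
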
 	 
\begin{proof}
Consider linear
shelves $C$ with vanishing coproduct
$\triangle=0$. The category of these has a zero
object, the shelf of vector space dimension 
$\dim_k(C)=0$, and a unique simple object,
the shelf of dimension $ \dim_k(C)=1$ with 
$\lhd = 0$. For all other objects $C$
there is a morphism of shelves
$C \rightarrow D$ with
nonzero kernel and nonzero image $D \neq 0$.  
Indeed, the quotient vector space
$C/\mathrm{im}\,\lhd$ is a linear shelf 
with respect to $\triangle=\lhd=0$, and  
the canonical projection $ C
\rightarrow C/\mathrm{im}\, \lhd$ 
is a morphism of linear
shelves. 
Its kernel vanishes if and
only if $\lhd=0$, that is, if $C$
is a direct sum of 1-dimensional shelves with
trivial coproduct and shelf product. In
this case, the canonical projection onto 
any quotient vector space $D$
of dimension $\dim_k(C)-1$ has the
desired properties. If instead $ \lhd
\neq 0$, then we can take the quotient 
$C \rightarrow D:=C/\mathrm{im}\, \lhd $
itself: for $\triangle = 0$, the
self-distributivity condition reads 
$(x \lhd y) \lhd z=0$ for all $x,y,z \in
C$; in particular, $ \mathrm{im}\, \lhd
\subseteq \bigcap_{x \in C} 
\mathrm{ker}\, (- \lhd x) \neq C$, 
hence $D \neq 0$.

In contrast, the shelf condition on the
counitisation 
$\hat C$ of a coalgebra $C$ with vanishing
coproduct says precisely that the subspace $C
\subset \hat C$ is a Leibniz algebra. In
particular, this means that among the rack
bialgebras with this underlying coalgebra
structure, we have the (counitisations of) 
all simple Lie algebras, and these 
do not admit any nontrivial proper quotients. 
\end{proof}

However, there are some subclasses of linear
shelves which do admit counitisations that
can be turned functorially into rack
bialgebras. The most important one is
obtained from linearised shelves 
spannned by group-like elements:

\begin{beispiel}\label{erstesbsp}

Assume that $(C,\triangle,\lhd)$ is a linear
shelf with a vector space basis $G$ 
consisting of group-like
elements. Then $\{1,1+x \mid x \in G\}$ is a
vector space basis of the counitisation $\hat{C}$. 
Now define a new rack product $\blacktriangleleft$
on $\hat{C}$ by
$$(1+x)\blacktriangleleft(1+y):=1+(x\lhd y).$$ 
The self-distributivity for $\blacktriangleleft$ follows immediately from the selfdistributivity of $\lhd$ and thus $\hat{C}$ becomes a rack bialgebra. 
\end{beispiel}

\section{From Hopf algebras to rack bialgebras}

Let $(H,\triangle_H,\epsilon_H,\mu_H,1_H,S_H)$ be a 
Hopf algebra over $k$. Then $ \mathrm{ker}\,
\epsilon_H$ is a (right right) 
Yetter-Drinfel'd module with respect
to the right adjoint action
$$
    h\lhd h' :=S(h'_{(1)})h h'_{(2)}
$$
and the right coaction
$$
	h \mapsto h_{(0)} \otimes 
	h_{(1)} := h_{(1)} \otimes 
	h_{(2)} -1 \otimes h.
$$
If $C \subset \mathrm{ker}\, \epsilon_H$ is invariant
under the adjoint action, then 
$\lhd$ restricts to a product 
$C \otimes C \rightarrow C$ which satisfies 
(\ref{selfdistributive}). However, 
$z_{(1)} \otimes z_{(2)}$ is the coproduct in $H$, and
in general, this does not restrict to 
$C$. One situation where this approach leads to linear
shelves is the following:

\begin{proposition}\label{husten}
Let $H$ be a cocommutative Hopf algebra and 
$C \subset \mathrm{ker}\, \epsilon_H$ be a
Yetter-Drinfel'd submodule. Then 
$\hat C := k 1_H \oplus C$ is a rack bialgebra with
respect to the adjoint action $\lhd$ and the restriction of $\triangle_H$ to $\hat C$.  
\end{proposition}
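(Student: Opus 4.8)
The plan is to check the defining axioms of a rack bialgebra for $\hat C = k 1_H \oplus C$ one by one, using as input the two facts recorded above: that $C$ is stable under the adjoint action $\lhd$ and under the Yetter-Drinfel'd coaction $\rho(c) := c_{(1)} \otimes c_{(2)} - 1_H \otimes c$, and that $\lhd$ satisfies the self-distributivity (\ref{selfdistributive}) on $C$. The main point, and the only place where cocommutativity is essential for the coalgebra structure, is to show that $\triangle_H$ maps $\hat C$ into $\hat C \otimes \hat C$. For $1_H$ this is clear, as $1_H$ is group-like. For $c \in C$ I would rewrite the coaction-invariance $\rho(c) \in C \otimes H$ as $\triangle_H(c) = \rho(c) + 1_H \otimes c$, which exhibits $\triangle_H(c) \in \hat C \otimes H$. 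Applying the tensor flip $\tau(a \otimes b) = b \otimes a$ and invoking cocommutativity $\tau \triangle_H = \triangle_H$, the same element equals $\tau\rho(c) + c \otimes 1_H \in H \otimes \hat C$. Since $(\hat C \otimes H) \cap (H \otimes \hat C) = \hat C \otimes \hat C$, this gives $\triangle_H(c) \in \hat C \otimes \hat C$. Coassociativity, the counit $\epsilon_H$, and the coaugmentation $1_H$ are then inherited directly from $H$.

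Second, I would verify that $\lhd$ restricts to $\hat C \otimes \hat C \to \hat C$ together with the three unital identities. Evaluating the adjoint action on the four combinations of $1_H$ and $C$ yields $c \lhd 1_H = c$ and $1_H \lhd 1_H = 1_H$, hence $x \lhd 1_H = x$; next $1_H \lhd c = S(c_{(1)}) c_{(2)} = \epsilon_H(c) 1_H = 0$ for $c \in C$, hence $1_H \lhd x = \epsilon_H(x) 1_H$; and finally $c \lhd c' \in C$ by adjoint-stability of $C$. These four cases show $\lhd$ preserves $\hat C$. The counit compatibility is immediate, $\epsilon_H(x \lhd y) = \epsilon_H(S(y_{(1)})) \epsilon_H(x) \epsilon_H(y_{(2)}) = \epsilon_H(x) \epsilon_H(y)$, using that $\epsilon_H$ is multiplicative and $\epsilon_H \circ S = \epsilon_H$. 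Self-distributivity (\ref{selfdistributive}) on $\hat C$ then reduces to the given case $x, y, z \in C$ plus the unital identities just established, which dispatch all terms in which one of the arguments equals $1_H$.

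Finally, I would check that $\lhd$ is a morphism of coalgebras, i.e. $\triangle_H(x \lhd y) = (x_{(1)} \lhd y_{(1)}) \otimes (x_{(2)} \lhd y_{(2)})$; this is the second and more laborious use of cocommutativity. Expanding $\triangle_H(S(y_{(1)}) x y_{(2)})$ with the fact that for cocommutative $H$ the antipode is a coalgebra map, $\triangle_H S = (S \otimes S) \triangle_H$, and $S^2 = \id$, one finds that the two sides agree after a permutation of the iterated coproduct factors of $y$, which is legitimate precisely because the $n$-fold coproduct of a cocommutative coalgebra is symmetric. Since $\triangle_H$ and $\lhd$ already preserve $\hat C$, this identity on $H$ restricts to $\hat C$. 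I expect this fourfold-coproduct bookkeeping, together with the closure statement for $\triangle_H$ in the first step, to be the main obstacle; everything else follows routinely from the Hopf algebra axioms.
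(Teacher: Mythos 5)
Your proof is correct and follows essentially the same route as the paper's: closure of $\triangle_H$ on $\hat C$ via the intersection $(\hat C\otimes H)\cap(H\otimes\hat C)=\hat C\otimes\hat C$ obtained from cocommutativity and the coaction-invariance of $C$, then the coalgebra-morphism property of $\lhd$ via the symmetry of the iterated coproduct. The only difference is one of emphasis: you spell out the unital and counit axioms (which the paper leaves implicit) while only sketching the two Sweedler computations (coalgebra morphism and self-distributivity) that the paper writes out in full; the self-distributivity, as you correctly note, holds for the adjoint action of any Hopf algebra and requires no cocommutativity.
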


\begin{proof}
As $C$ is a (right) subcomodule, we have $h_{(0)}\otimes h_{(1)}\in C\otimes H$. But 
$h_{(0)}\otimes h_{(1)}=h_{(1)}\otimes h_{(2)}-1\otimes h$, thus we conclude that 
$h_{(1)}\otimes h_{(2)}=h_{(0)}\otimes h_{(1)}+1\otimes h\in C\otimes H$. But by cocommutativity,
this implies that  $h_{(1)}\otimes h_{(2)}=h_{(2)}\otimes h_{(1)}\in H\otimes C$. In conclusion,
$$h_{(1)}\otimes h_{(2)}\in C\otimes H\cap H\otimes C=C\otimes C,$$
i.e. $C$ is stable under the coproduct of $H$. 

The adjoint action is a morphism of coalgebras thanks to cocommutativity:
\begin{align*}
& \quad (a \lhd b)_{(1)}
	\otimes (a \lhd b)_{(2)} \\
& =
(S(b_{(1)})ab_{(2)})_{(1)}
	\otimes(S(b_{(1)})ab_{(2)})_{(2)} \\
&= (S(b_{(1)}))_{(1)}a_{(1)}(b_{(2)})_{(1)}
	\otimes
	(S(b_{(1)}))_{(2)}a_{(2)}(b_{(2)})_{(2)}  \\
&= S(b_{(3)})a_{(1)}b_{(2)}\otimes 
	S(b_{(1)})a_{(2)} b_{(4)}  \\
&= S(b_{(1)})a_{(1)}b_{(2)} 
	\otimes S(b_{(3)})a_{(2)} b_{(4)} \\
&= (a_{(1)} \lhd b_{(1)})) \otimes  
	(a_{(2)} \lhd b_{(2)}),
\end{align*}
where we have used cocommutativity in the last step.   

As mentioned before, the self-distributivity is the only property which works independently of the cocommutativity of $C$. Indeed, on the one hand, we have:
$$(a\lhd b)\lhd c = S(c_{(1)})S(b_{(1)})ab_{(2)}c_{(2)} $$
And on the other hand, we have:
\begin{align*}
& \quad 
	(a\lhd c_{(1)})\lhd(b\lhd c_{(2)}) \\
&= S(S(c_{(3)})_{(1)}b_{(1)}(c_{(4)})_{(1)})(S(c_{(1)})ac_{(2)})(S(c_{(3)})_{(2)}b_{(2)}(c_{(4)})_{(2)}) \\
&= S(S(c_{(4)})b_{(1)}c_{(5)})(S(c_{(1)})ac_{(2)})(S(c_{(3)})b_{(2)}c_{(6)}) \\
&= S(c_{(5)})S(b_{(1)})S^2(c_{(4)}) S(c_{(1)})ac_{(2)}S(c_{(3)})b_{(2)}c_{(6)} \\
&= S(c_{(3)})S(b_{(1)})S(c_{(2)}) S(c_{(1)})ab_{(2)}c_{(4)} \\
&= S(c_{(1)})S(b_{(1)})ab_{(2)}c_{(2)}
\qedhere
\end{align*} 
\end{proof}

\begin{beispiel}  \label{example_shelf}
If $X$ is a shelf in the category of sets 
\cite{FennRourke}, then as discussed in
Example~\ref{erstesbsp}, the
counitisation $C=\widehat{kX}$ of its linearisation 
becomes a rack bialgebra in which all $x \in X$
are group-like. 
Observe that this construction differs
slightly from the construction in
\cite{CCES}, Section 3.1.
\end{beispiel} 
 
\begin{beispiel}  \label{example_Leibniz_algebra}
Given a (right) Leibniz algebra ${\mathfrak h}$, the $k$-vector space 
$C:=k\oplus{\mathfrak h}$ becomes 
a rack bialgebra by extending the bracket $[x,y]=:x\lhd y$ to a shelf product on all of 
$k\oplus{\mathfrak h}$. More precisely, we endow first of all $C$ with a coproduct requiring that 
all elements of ${\mathfrak h}$ are primitive, $\triangle(1)=1\otimes 1$,
$\epsilon(x)=0$ for all $x\in{\mathfrak h}$ and $\epsilon(1)=1$. Then put for all 
$x,y\in{\mathfrak h}$ $x\lhd y=[x,y]$, $1\lhd
x=\epsilon(x)1$ and $x\lhd 1=x$. 
This gives a rack bialgebra. 
\end{beispiel}

\section{Yetter-Drinfel'd racks}
The question arises which rack
bialgebras
arise as in Proposition~\ref{husten}.
Just as the cocommutativity of $H$
was therein a sufficient, but not a necessary
assumption, the construction of a
bialgebra that we carry out now can 
also be applied to certain noncocommutative 
rack bialgebras. Hence we consider the
following general setting adapted from 
\cite[Proposition~5.5]{KW}:

\begin{definition}\label{def-ydr}
Let $H$ be a bialgebra. A
\emph{Yetter-Drinfel'd rack} over $H$ 
is a rack bialgebra $C$ together with a right
$H$-module structure $\cdot \colon 
C \otimes H \rightarrow C$ rendering 
$C$ an $H$-module coalgebra,
and a morphism 
$ q \colon C \rightarrow H$ of counital
coaugmented coalgebras such that 
\begin{equation}\label{c}
	a \lhd b = a \cdot q(b)
\end{equation}
and
\begin{equation}\label{d}
	h_{(1)} q(a \cdot h_{(2)}) = 
	q(a)h 
\end{equation}
hold for all $h \in H$ and $a,b \in C$.
\end{definition}

In the cocommutative setting, we have, 
as the name suggests:

\begin{proposition}\label{proposition3}
Let $H$ be a cocommutative bialgebra and 
$C$ be a Yetter-Drinfel'd rack. 
Then $C$ becomes a 
Yetter-Drinfel'd module with respect to the
coaction 
$C \rightarrow C \otimes H$,
$$
	x \mapsto 
	x_{(0)} \otimes x_{(1)}:=
	(x_{(1)} - \epsilon (x_{(1)})) 
	\otimes q(x_{(2)}) +
	\epsilon (x) \otimes 1.
$$ 
\end{proposition}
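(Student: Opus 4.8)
The plan is to check directly that the stated map $\rho\colon C\to C\otimes H$, $\rho(x)=x_{(0)}\otimes x_{(1)}$, is a coassociative, counital right $H$-coaction, and that it satisfies the (right-right) Yetter-Drinfel'd compatibility with the given action $\cdot$. As a first simplifying step I would rewrite the coaction in the closed form
$$
\rho(x)=x_{(1)}\otimes q(x_{(2)})-1\otimes\bigl(q(x)-\epsilon(x)1_H\bigr),
$$
which follows from the displayed formula by the counit identity $\epsilon(x_{(1)})x_{(2)}=x$ together with $q(1)=1_H$ and $\epsilon_H\circ q=\epsilon$. Writing $\bar q(x):=q(x)-\epsilon(x)1_H$ for the projection of $q(x)$ onto $\ker\epsilon_H$, this reads $\rho(x)=x_{(1)}\otimes q(x_{(2)})-1\otimes\bar q(x)$, and this is the form I would carry through every computation.

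The comodule axioms are then routine. Counitality $(\id\otimes\epsilon_H)\rho=\id$ is immediate from $\epsilon_H\circ q=\epsilon$ and $\epsilon(x_{(1)})x_{(2)}=x$. For coassociativity I would expand both $(\rho\otimes\id)\rho$ and $(\id\otimes\triangle_H)\rho$, using that $q$ is a morphism of coalgebras, so that $\triangle_H(q(y))=q(y_{(1)})\otimes q(y_{(2)})$, and that $\rho(1)=1\otimes 1_H$. The correction terms reorganise by means of $\epsilon(x_{(1)})q(x_{(2)})=q(x)$, and the two sides agree; it is worth noting that this part uses nothing about cocommutativity.

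The Yetter-Drinfel'd compatibility is the heart of the matter. Since $H$ is only assumed to be a bialgebra, I would use the antipode-free form of the right-right axiom,
$$
(x\cdot h_{(2)})_{(0)}\otimes h_{(1)}(x\cdot h_{(2)})_{(1)}
= x_{(0)}\cdot h_{(1)}\otimes x_{(1)}h_{(2)},
$$
substitute the closed form of $\rho$ on both sides, and apply the module-coalgebra identity $\triangle(x\cdot h)=(x_{(1)}\cdot h_{(1)})\otimes(x_{(2)}\cdot h_{(2)})$. Each side then splits into two summands. The summand arising from $-1\otimes\bar q$ is matched using (\ref{d}), the counit relation $\epsilon(x\cdot h)=\epsilon(x)\epsilon_H(h)$, and the fact that the coaugmentation is fixed by the action, $1\cdot h=\epsilon_H(h)1$; concretely one checks $h_{(1)}\bar q(x\cdot h_{(2)})=\bar q(x)h$. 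The remaining main summand becomes $(x_{(1)}\cdot h_{(2)})\otimes h_{(1)}q(x_{(2)}\cdot h_{(3)})$, and here is where cocommutativity enters: I would use it to rearrange the threefold coproduct of $h$ so that the two legs feeding into $h_{(1)}q(x_{(2)}\cdot h_{(3)})$ become consecutive, apply (\ref{d}) to collapse them to $q(x_{(2)})h_{(1)}$, and then invoke cocommutativity once more to identify $(x_{(1)}\cdot h_{(2)})\otimes q(x_{(2)})h_{(1)}$ with the right-hand summand $(x_{(1)}\cdot h_{(1)})\otimes q(x_{(2)})h_{(2)}$.

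The main obstacle I anticipate is precisely this bookkeeping in the main summand: keeping track of which leg of the threefold coproduct of $h$ acts on $x_{(1)}$ while the other two are brought into the pattern required by (\ref{d}), and doing so simultaneously with the counit-correction terms inherited from $\bar q$. A second, milder point is the use of $1\cdot h=\epsilon_H(h)1$, i.e.~that the coaugmentation $k\to C$ is a morphism of right $H$-modules, which I would regard as part of the coaugmented $H$-module coalgebra structure underlying a Yetter-Drinfel'd rack (consistent with $1\lhd b=\epsilon(b)1$ via (\ref{c})).
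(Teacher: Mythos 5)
Your proposal is correct and follows essentially the same route as the paper: both verify the Yetter--Drinfel'd compatibility summand by summand, using the $H$-module coalgebra property, relation (\ref{d}) applied after a cocommutativity rearrangement of the legs of $\triangle^2(h)$, and $1\cdot h=\epsilon_H(h)1$ for the correction terms. The only cosmetic differences are that you group the paper's second and third terms into a single $-1\otimes\bar q(x)$ summand and that you spell out the comodule axioms, which the paper dispatches in one sentence.
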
 
\begin{proof}
The above formula defines a right coaction, as it is 
constructed using the coproduct, the counit and the morphism of coalgebras $q$.  

Let us check the Yetter-Drinfel'd property. The two sides in the YD-property have three terms. Let us reason term by term. For the first term, we have for $x\in C$ and $h\in H$
\begin{multline*}
(x\cdot h_{(2)})_{(1)}\otimes h_{(1)} q(x\cdot h_{(2)})_{(2)}=\\
=x_{(1)}\cdot h_{(2)}\otimes h_{(1)} q(x_{(2)}\cdot h_{(3)})=\\
=x_{(1)}\cdot h_{(1)}\otimes q(x_{(2)})h_{(2)},
\end{multline*}
where we were able to apply the above Condition (\ref{d}) in the last step only thanks to cocommutativity. 

For the second term, we have
$$-1\otimes h_{(1)}q(x\cdot h_{(2)})=-1\cdot h_{(1)}\otimes q(x)h_{(2)}$$
thanks to Condition (\ref{d}) and $1\cdot a=\epsilon(a)1$. The third term is simply
$$
	\epsilon(x\cdot h_{(2)})1 \otimes h_{(1)}=
	\epsilon(x)1\cdot h_{(1)}\otimes h_{(2)},$$
which is simply true again by $1\cdot a=\epsilon(a)1$. 
\end{proof} 

\begin{bemerkung}
Note that for elements $h=q(c)$ 
in the image of $q$, the $H$-module coalgebra 
property $(x \cdot h)_{(1)} \otimes 
(x \cdot h)_{(2)} = 
x_{(1)} \cdot h_{(1)} \otimes 
x_{(2)} \cdot h_{(2)}$ 
is satisfied automatically 
by the fact that $\lhd$ is a 
morphism of coalgebras. 
Hence the $H$-module coalgebra condition
in Definition~\ref{def-ydr} can be omitted if $H$
is generated as an algebra by $
\mathrm{im}\, q$. 
\end{bemerkung}

If $H$ is a Hopf algebra (admits an antipode), then 
$\mathrm{ker}\, \epsilon $ is a
Yetter-Drinfel'd module with respect to the
right adjoint action, and (\ref{d}) and the 
coalgebra morphism condition on $q$ are
equivalent to 
$q|_C \colon C \rightarrow \mathrm{ker}\,
\epsilon $ being a morphism of
Yetter-Drinfel'd modules.

\section{The universal enveloping
algebra $U(C)$} 
\label{subsection_construction}
Given any rack bialgebra $C$, let 
$T=k \oplus \check C \oplus 
\check C^{\otimes 2} \oplus
\ldots$ denote the tensor algebra of 
$\check C = \mathrm{ker}\, \epsilon$ and 
$i \colon C \rightarrow T$ 
be the
canonical inclusion, which is the
identity on $\check C$ and maps the
distinguished group-like $1 \in C$ to 
the scalar $1 \in k=\check C^{\otimes
0}$. As we will also consider
the tensor product $T \otimes T$, we
denote the product in $T$ by $.$ rather
than $ \otimes $. 
By the universal
property of $T$, the linear map 
$$
	\check C \rightarrow T \otimes T,\quad
	x \mapsto i(x_{(1)}) 
	\otimes i(x_{(2)})
$$
extends uniquely to an algebra map 
$ \triangle_{T} \colon T \rightarrow 
T \otimes T$. The  
coassociativity of the coproduct in $C$ 
implies that 
of $\triangle_{T}$. That is, 
$T$ becomes a bialgebra and $i$ yields an
embedding of counital coaugmented 
coalgebras $C \rightarrow
T$. 

Using once more the 
universal property of $T$, the rack product
$$
	\lhd \colon \check C \to\End(C),\quad 
	x\mapsto (y\mapsto y\lhd x)
$$
extends
to an algebra 
homomorphism $T \to\End(C)$, so 
$C$ becomes a right $T$-module coalgebra such that  
$x \lhd y = x\cdot i(y)$.

However, $i$ does not turn $C$ into a
Yetter-Drinfel'd rack over $H=T$, as 
the commutativity relation (\ref{d}) is not
satisfied in general. Hence we add the 
relations manually: 
\begin{definition}
For any rack bialgebra $C$ 
we denote by $U(C)$ 
the symmetric algebra
of $C$ with respect to the  
vector space braiding 
$$
	\tau \colon 
	C \otimes C \rightarrow C \otimes C,
	\quad
	x \otimes y \mapsto 
	y_{(1)} \otimes x \lhd y_{(2)},
$$  
that is, $U(C):=T/J$ where $T=T(\check
C)$ and 
$$
	J :=
	\langle  i(y_{(1)}).i(x\lhd y_{(2)}))
	-i(x).i(y)\,|\,x,y\in C\rangle.
$$
We call $U(C)$ the \emph{universal
enveloping algebra} of $C$ and 
denote the canonical map $C
\rightarrow U(C)$ by $q$. 
\end{definition}

A key observation is that in case
$C$ is cocommutative, the
coproduct $ \triangle_{T}$ 
descends to $U(C)$:

\begin{lemma}
The ideal $J$ is also a coideal 
in case $C$ is cocommutative.
\end{lemma}

\begin{proof}
As $ \triangle_T $ is an algebra map, it is
sufficient to prove that the coproduct of a
generating element of $J$ belongs to 
$T \otimes J + J \otimes T$.
\begin{align*} 
& \quad \triangle_T(i(y_{(1)}).i(x\lhd y_{(2)})-
	i(x).i(y)) \\
&=
 i(y_{(1)}).i(x\lhd y_{(3)})_{(1)}\otimes i(y_{(2)}).i(x\lhd
y_{(3)})_{(2)}\\
& \quad -i(x_{(1)}).i(y_{(1)})\otimes i(x_{(2)}).i(y_{(2)})\\
&= i(y_{(1)}).i(x_{(1)}\lhd y_{(3)})\otimes
i(y_{(2)}).i(x_{(2)}\lhd y_{(4)})\\
& \quad -i(x_{(1)}).i(y_{(1)})\otimes 
i(x_{(2)}).i(y_{(2)})
\end{align*}
By cocommutativity, this is an element
of 
$T \otimes J + J \otimes T$:
\begin{align*}
& \quad i(y_{(1)}).i(x_{(1)}\lhd y_{(3)})\otimes
i(y_{(2)}).i(x_{(2)}\lhd
y_{(4)})\\
&\quad -i(x_{(1)}).i(y_{(1)})\otimes 
i(x_{(2)}).i(y_{(2)}) \\
& = i(y_{(3)}).i(x_{(1)}\lhd y_{(4)})\otimes
i(y_{(1)}).i(x_{(2)}\lhd
y_{(2)})\\
&\quad -i(x_{(1)}).i(y_{(1)})\otimes 
i(x_{(2)}).i(y_{(2)}) \\
& = \big(i(y_{(3)}).i(x_{(1)}\lhd
y_{(4)})-i(x_{(1)}).i(y_{(3)})\big)\otimes
i(y_{(1)}).i(x_{(2)}\lhd y_{(2)}) +\\
&\quad + i(x_{(1)}).i(y_{(3)})\otimes
i(y_{(1)}).i(x_{(2)}\lhd
y_{(2)})\\
&\quad -i(x_{(1)}).i(y_{(1)})\otimes
i(x_{(2)}).i(y_{(2)}) \\
&=  \big(i(y_{(3)}).i(x_{(1)}\lhd
y_{(4)})-i(x_{(1)}).i(y_{(3)})\big)\otimes
i(y_{(1)}).i(x_{(2)}\lhd y_{(2)}) +\\
&\quad + i(x_{(1)}).i(y_{(1)})\otimes\big(
i(y_{(2)}).i(x_{(2)}\lhd
y_{(3)})-i(x_{(2)}).i(y_{(2)})\big) \\
&\in J\otimes T + T\otimes J.\qedhere
\end{align*}
\end{proof}

Note that the action of $T$ 
on $C$ passes to an action of $U(C)$ on
$C$,
thanks to the self-distributivity of 
$\lhd$:

\begin{lemma}   \label{lemma_action}
For all $x,y,z\in C$, we have:
$$
	(x\cdot i(y))\cdot i(z)\,=
\,(x\cdot i(z_{(1)}))\cdot i(y \lhd z_{(2)})
$$
\end{lemma}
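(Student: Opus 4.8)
The plan is to reduce the asserted identity directly to the self-distributivity axiom~(\ref{selfdistributive}) by unwinding the definition of the right $T$-action on $C$. Recall that this action was built from the rack product via the universal property of $T$, so that it satisfies $c \cdot i(w) = c \lhd w$ for all $c, w \in C$, and more generally the module law $(c \cdot t) \cdot t' = c \cdot (t.t')$ for $t, t' \in T$. After performing the inner action, both sides of the claimed equation have the form $c \cdot i(w)$ with $c, w \in C$, so the whole statement should collapse to a relation expressed purely through $\lhd$.

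Concretely, I would first rewrite the left-hand side. Since $x \cdot i(y) = x \lhd y$ again lies in $C$, a second application of the same identity gives $(x \cdot i(y)) \cdot i(z) = (x \lhd y) \lhd z$. For the right-hand side I proceed identically: $x \cdot i(z_{(1)}) = x \lhd z_{(1)} \in C$, and because $y \lhd z_{(2)} \in C$ the element $i(y \lhd z_{(2)})$ is defined, whence $(x \cdot i(z_{(1)})) \cdot i(y \lhd z_{(2)}) = (x \lhd z_{(1)}) \lhd (y \lhd z_{(2)})$. Here the Sweedler indices $z_{(1)} \otimes z_{(2)}$ refer to the coproduct of $z$ in $C$, which is legitimate since $i$ is a morphism of coalgebras.

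At this point the two sides read $(x \lhd y) \lhd z$ and $(x \lhd z_{(1)}) \lhd (y \lhd z_{(2)})$, and these coincide by~(\ref{selfdistributive}) applied verbatim to $x, y, z \in C$. I do not anticipate a genuine obstacle: once the action is translated back into $\lhd$, the lemma is nothing but the self-distributivity axiom, and the only care needed is the bookkeeping that each intermediate result remains in $C$ so that the next action is defined. The real significance of the reformulation, rather than any difficulty, is what matters downstream: rewriting the identity as $x \cdot \bigl(i(z_{(1)}).i(y \lhd z_{(2)}) - i(y).i(z)\bigr) = 0$ shows that the $C$-action annihilates the generators of the ideal $J$, which is precisely the input required for the $T$-action to descend to an action of $U(C) = T/J$ on $C$.
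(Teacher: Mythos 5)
Your argument is correct and is essentially identical to the paper's own proof: both unwind the action via $c\cdot i(w)=c\lhd w$ so that the two sides become $(x\lhd y)\lhd z$ and $(x\lhd z_{(1)})\lhd(y\lhd z_{(2)})$, which coincide by the self-distributivity axiom~(\ref{selfdistributive}). Your closing observation about why this identity makes the $T$-action descend to $U(C)$ is accurate and matches the role the lemma plays in the text.
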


\begin{proof}
We have
\begin{align*}
& \quad (x\cdot i(z_{(1)}))\cdot i(y\lhd
z_{(2)}) \\
&= (x\lhd z_{(1)})\lhd(y\lhd z_{(2)}) \\
&= (x\lhd y)\lhd z \\
&= (x\cdot i(y))\cdot i(z).\qedhere
\end{align*}
\end{proof}

We thus arrive at the following
theorem, which realises $\check C$ as in 
\cite[Proposition~5.5]{KW} as a braided
Leibniz algebra:

\begin{theorem}    \label{theorem_braided_Leibniz}
The universal enveloping algebra 
$U(C)$ of a cocommutative rack bialgebra is 
canonically
a bialgebra, and $C$ becomes canonically 
a $U(C)$-Yetter-Drinfel'd rack.  
If furthermore $q_H \colon C \rightarrow H$ 
is any 
Yetter-Drinfel'd rack structure on $C$, 
then there exists a unique morphism 
of bialgebras $u \colon U(C) \to H$ such
that $u \circ q=q_H$ and hence 
\begin{equation}  \label{equivariance_property}
	x\cdot_{U(C)} s \,=\, x\cdot_H u(s)
\end{equation}
holds for all $x \in C,s \in U(C)$.
\end{theorem}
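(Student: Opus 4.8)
The plan is to verify the theorem in three stages, each building on the lemmas already established. First, I would check that $U(C)$ is a bialgebra: the preceding lemma shows that $J$ is simultaneously an ideal and a coideal when $C$ is cocommutative, so the bialgebra structure on $T$ (algebra map $\triangle_T$, counit, coaugmentation) descends to the quotient $U(C)=T/J$ automatically. Cocommutativity of $U(C)$ follows because $C$ is cocommutative and $U(C)$ is generated by the image of $q$. The canonical map $q\colon C\to U(C)$ is a morphism of counital coaugmented coalgebras by construction, since $i\colon C\to T$ already was one.

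Second, I would establish that $C$ is a Yetter-Drinfel'd rack over $U(C)$. The right $T$-module coalgebra structure on $C$ was constructed from the universal property of $T$; Lemma~\ref{lemma_action} shows precisely that this action factors through $U(C)$, because the relation defining $J$ acts trivially on $C$. One then verifies the two defining conditions of Definition~\ref{def-ydr}: condition~(\ref{c}), $a\lhd b=a\cdot q(b)$, holds because $x\lhd y=x\cdot i(y)$ in $T$ descends to $U(C)$; condition~(\ref{d}), the commutativity relation $h_{(1)}q(a\cdot h_{(2)})=q(a)h$, holds on generators $h=q(b)$ \emph{by the very definition of $J$}, and then extends to all of $U(C)$ since both sides are multiplicative in $h$ (here cocommutativity is used, as in the proof of Proposition~\ref{proposition3}).

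Third, and most importantly, I would prove the universal property. Given any Yetter-Drinfel'd rack structure $q_H\colon C\to H$, the module action of $H$ on $C$ makes $\check C$ act on $C$, which by the universal property of the tensor algebra yields a unique algebra map $\tilde u\colon T\to H$ with $\tilde u\circ i=q_H$. One must check that $\tilde u$ kills the ideal $J$: applying $\tilde u$ to a generator $i(y_{(1)}).i(x\lhd y_{(2)})-i(x).i(y)$ and using condition~(\ref{d}) for the target rack $H$ together with $a\lhd b=a\cdot q_H(b)$ should show it vanishes, so $\tilde u$ descends to $u\colon U(C)\to H$. One then confirms $u$ is a coalgebra map (hence a bialgebra map) and that the equivariance property~(\ref{equivariance_property}) follows from $u\circ q=q_H$ together with the module axioms. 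Uniqueness is immediate because $U(C)$ is generated as an algebra by $\mathrm{im}\,q$.

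The main obstacle I anticipate is the verification that $\tilde u$ annihilates $J$, since this is where the defining commutativity relation~(\ref{d}) of a Yetter-Drinfel'd rack must be invoked in the target $H$ rather than assumed; getting the Sweedler bookkeeping right so that $\tilde u(i(y_{(1)}).i(x\lhd y_{(2)}))=\tilde u(i(x).i(y))$ reduces exactly to an instance of~(\ref{d}) is the delicate computational point. A secondary subtlety is confirming that the descended $u$ respects the \emph{coalgebra} structure and not merely the algebra structure; here one uses that $q_H$ is a coalgebra morphism and that $U(C)$ is generated by $\mathrm{im}\,q$, so compatibility need only be checked on generators.
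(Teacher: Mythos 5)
Your proposal is correct and follows essentially the same route as the paper: the bialgebra structure and the Yetter--Drinfel'd rack structure on $C$ are read off from the preceding lemmas, and the universal property is obtained by extending $q_H$ to the tensor algebra, observing it is a bialgebra map on generators, and using conditions~(\ref{c}) and~(\ref{d}) for $H$ to see that the generators of $J$ are annihilated. The point you flag as delicate --- that $\tilde u$ kills $J$ reduces exactly to $q_H(x)q_H(y)=q_H(y_{(1)})q_H(x\lhd y_{(2)})$ --- is precisely the one-line computation the paper records.
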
 
\begin{proof}
The $U(C)$-Yetter-Drinfel'd rack
structure of $C$ has been established
already. 
By the universal property
of the tensor algebra, there 
exists a unique algebra homomorphism
$T(\check C) \to H$ such that 
$$
	c_1.\cdots.c_l \mapsto
	q_H(c_1) \cdots q_H(c_l).
$$
This is a morphism of coalgebras on the level of 
generators, and thus, by multiplicativity, 
a morphism of coalgebras, i.e. a morphism of 
bialgebras. 

As $C$ is a Yetter-Drinfel'd rack over
$H$, we have
$$
	q_H(x)q_H(y) = q_H(y_{(1)}) q_H(x \lhd 
	y_{(2)}),\quad
	x,y \in \check C.  
$$
Hence the bialgebra map induces a 
bialgebra morphism
$$
	u \colon U(C)\to H.
$$
The equality $u \circ q=q_H$
is true by construction.  
\end{proof}

\begin{corollary}\label{schnupfen}
If $C$ is a cocommutative rack bialgebra
and $U(C)$ is a Hopf algebra, then $C$
arises as in Proposition~\ref{husten}.
\end{corollary}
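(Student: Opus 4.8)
The plan is to combine Theorem~\ref{theorem_braided_Leibniz} with Proposition~\ref{husten}. By the theorem, since $C$ is a cocommutative rack bialgebra, its universal enveloping algebra $U(C)$ is canonically a bialgebra, and $C$ becomes a $U(C)$-Yetter-Drinfel'd rack via the canonical coalgebra morphism $q\colon C\to U(C)$. The hypothesis adds that $U(C)$ is in fact a Hopf algebra, so it admits an antipode $S$; moreover, being a quotient of the tensor algebra $T(\check C)$ by the ideal $J$, the bialgebra $U(C)$ inherits cocommutativity of its coproduct from the cocommutativity of $C$ (this is exactly what the coideal Lemma delivers, as $\triangle_T$ descends to $U(C)$ and acts on generators by the cocommutative coproduct of $C$). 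Thus $H:=U(C)$ is a cocommutative Hopf algebra, which is precisely the kind of $H$ figuring in Proposition~\ref{husten}.

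Next I would identify $C$ with a Yetter-Drinfel'd submodule of $\ker\epsilon_H$. The remark following Definition~\ref{def-ydr} states that when $H$ is a Hopf algebra, the conditions defining a Yetter-Drinfel'd rack are equivalent to $q|_C\colon C\to\ker\epsilon_H$ being a morphism of Yetter-Drinfel'd modules, where $\ker\epsilon_H$ carries the right adjoint action $h\lhd h'=S(h'_{(1)})hh'_{(2)}$ and the associated right coaction. Concretely, I would check that $q$ lands in $\ker\epsilon_H$ on $\check C$ (indeed $q$ is a morphism of counital coaugmented coalgebras, so $\epsilon_H\circ q=\epsilon$, whence $q(\check C)\subseteq\ker\epsilon_H$), and that the image $q(\check C)$ is stable under the adjoint action and the coaction. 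The defining relation~(\ref{c}), namely $a\lhd b=a\cdot q(b)$, together with the equivariance property~(\ref{equivariance_property}) specialised to $s=q(b)$, matches the rack product on $C$ with the adjoint action transported through $q$.

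The remaining point is that $q|_{\check C}$ is injective, so that $C$ is genuinely realised as a submodule rather than merely mapped into $H$. Here I would argue that the $T$-module structure on $C$ satisfies $x\lhd y=x\cdot i(y)$ and factors through $U(C)$; since $1\lhd x=\epsilon(x)1$ and $x\lhd 1=x$ encode the coaugmentation and counit compatibly with $q$, the canonical coalgebra embedding $C\to T$ composed with the projection $T\to U(C)$ remains injective on the group-like $1$ and on $\check C$ because $\hat C=k1\oplus\check C$ splits and $q$ respects this splitting. With $q|_{\check C}$ injective and its image a Yetter-Drinfel'd submodule of $\ker\epsilon_H$, Proposition~\ref{husten} applied to $H=U(C)$ and the submodule $q(\check C)\subseteq\ker\epsilon_H$ reconstructs $\widehat{q(\check C)}=k1_H\oplus q(\check C)$ as a rack bialgebra isomorphic to $C$, exhibiting $C$ as arising in the stated way.

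The main obstacle I anticipate is the injectivity of $q|_{\check C}$: nothing a priori prevents the relations in $J$ from collapsing part of $\check C$, and the claim that $C$ genuinely embeds must be extracted from the fact that $C$ is already a faithful $U(C)$-module coalgebra via~(\ref{equivariance_property}) rather than taken for granted. If injectivity failed, $C$ would only map to, not embed into, a construction of the form in Proposition~\ref{husten}; so the crux is to verify that the universal property forces $q$ to be injective on $\check C$, which should follow because the counital coaugmented coalgebra embedding $C\hookrightarrow T$ is split by the grading and $q$ is its composite with the quotient map that is the identity on the degree-one part modulo relations not involving single generators.
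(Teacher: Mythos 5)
Your strategy of taking $H=U(C)$ directly founders on exactly the point you flag as the crux: the injectivity of $q|_{\check C}$ is false in general, even under the additional hypothesis that $U(C)$ is a Hopf algebra. The generators $i(y_{(1)}).i(x\lhd y_{(2)})-i(x).i(y)$ of the ideal $J$ are \emph{inhomogeneous}: expanding the coproduct of $y\in\check C$ as $1\otimes y+y\otimes 1+y_{(\check 1)}\otimes y_{(\check 2)}$ produces the degree-one term $i(x\lhd y)$ inside each generator, so the relations do ``involve single generators'' and the quotient map $T\to U(C)$ is not the identity on the degree-one part. Concretely, for the rack bialgebra $C=k\cdot 1\oplus\mathfrak{h}$ of a Leibniz algebra (Example~\ref{example_Leibniz_algebra}), the generators of $J$ reduce to $[x,y]+y.x-x.y$, so $q([x,x])=0$ for every $x$; here $U(C)\cong U(\mathfrak{h}_{\mathrm{Lie}})$ is a perfectly good cocommutative Hopf algebra while $q$ kills the entire Leibniz kernel (Example~\ref{beispiel_Leibniz}). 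Hence your argument shows only that $C$ \emph{maps to}, rather than embeds into, a configuration of the form of Proposition~\ref{husten}, which, as you yourself observe, is not enough.

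The paper's proof sidesteps this by not taking $H=U(C)$ at all: it takes the semidirect (smash) product $H=T\rtimes U(C)$ with $T=T(\check C)$, noting explicitly that this enlargement is necessary precisely because $q$ can fail to be injective. In $T\rtimes U(C)$ the space $\check C$ embeds through the degree-one part of the tensor-algebra factor, the adjoint action of the $U(C)$-factor implements the rack product via $x\lhd y=x\cdot q(y)$, and the hypothesis that $U(C)$ is a Hopf algebra is what one needs for $H$ to be a Hopf algebra so that Proposition~\ref{husten} applies. The parts of your argument establishing that the relevant bialgebra is cocommutative and that $q$ is a coalgebra map landing in $\ker\epsilon_H$ are fine and carry over, but the embedding of $\check C$ must come from the tensor-algebra factor, not from $q$.
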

\begin{proof}
Indeed, just take $H=T \rtimes U(C)$ - 
taking the semidirect product is
necessary when $q \colon C \rightarrow
U(C)$ is not injective as in
Example~\ref{beispiel_Leibniz} below. 
\end{proof}

We now construct a bialgebra object 
in Loday-Pirashvili's category 
${\mathcal L}{\mathcal M}$ out of the
Yetter-Drinfel'd rack $C$. Recall that 
$\mathcal{LM}$ is the monoidal 
category of linear maps between vector
spaces with the so-called infinitesimal
tensor product as monoidal product, see
\cite{LP} for details. As explained in 
\cite{KW}, a bialgebra object 
in ${\mathcal L}{\mathcal M}$ consists
of a 
bialgebra $H$, an 
$H$-tetramodule $M$ and an $H$-bilinear 
coderivation $f \colon M\to H$. 
As is well-known \cite[Section~13.1.3]{KS}, any
Yetter-Drinfel'd module $V$ over a
bialgebra $H$ gives rise to a
tetramodule whose underlying vector
space is $H \otimes V$. Its actions and
coactions are given by
$$
	g (h \otimes v) g' :=
	ghg'_{(1)} \otimes 
	v \cdot g'_{(2)}, 
$$
$$
	(h \otimes v)_{(-1)} \otimes 
	(h \otimes v)_{(0)} \otimes 
	(h \otimes v)_{(1)} =
	h_{(1)} \otimes (h_{(2)} 
	\otimes v_{(0)}) \otimes 
	h_{(3)} v_{(1)} .
$$

Now we have:

\begin{theorem}  \label{theorem_bialgebra_object}
If $C$ is a cocommutative rack
bialgebra, then 
$$
	U(C) \otimes
	\check C \rightarrow U(C),\quad
	x \otimes y \mapsto x.q(y)  
$$ 
is canonically a bialgebra object 
in Loday-Pirashvili's category of linear maps 
${\mathcal L}{\mathcal M}$. 
\end{theorem}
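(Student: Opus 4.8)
The plan is to exhibit the asserted bialgebra object as the triple consisting of the bialgebra $H := U(C)$, the tetramodule $M := U(C) \otimes \check C$, and the linear map $f \colon M \to H$, $x \otimes y \mapsto x.q(y)$. By Theorem~\ref{theorem_braided_Leibniz}, $U(C)$ is a bialgebra and $C$ is a $U(C)$-Yetter-Drinfel'd rack; since $C$ is cocommutative, so is $U(C)$, and Proposition~\ref{proposition3} endows $C$ with a Yetter-Drinfel'd module structure over $U(C)$. First I would check that the action and the coaction $x \mapsto (x_{(1)} - \epsilon(x_{(1)})1) \otimes q(x_{(2)}) + \epsilon(x)\otimes 1$ restrict to $\check C$, so that $\check C$ is a Yetter-Drinfel'd submodule: for $x \in \check C$ the coaction reduces to $(x_{(1)} - \epsilon(x_{(1)})1) \otimes q(x_{(2)}) \in \check C \otimes U(C)$, and the action preserves $\check C$ because $\epsilon(x \lhd y) = \epsilon(x)\epsilon(y)$. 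The cited result \cite[Section~13.1.3]{KS} then produces the tetramodule $M = U(C) \otimes \check C$ with the actions and coactions recalled above, so no further verification of the tetramodule axioms is required.

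It remains to check that $f$ is an $H$-bilinear coderivation. For bilinearity, left $U(C)$-linearity is immediate from $g(h \otimes v) = gh \otimes v$, giving $f(g(h\otimes v)) = ghq(v) = g\, f(h \otimes v)$. Right $U(C)$-linearity unwinds, via $(h \otimes v)g' = hg'_{(1)} \otimes v \cdot g'_{(2)}$, to the identity $g'_{(1)} q(v \cdot g'_{(2)}) = q(v)g'$, which is precisely the compatibility (\ref{d}) of the Yetter-Drinfel'd rack; this is the step where cocommutativity is essential.

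For the coderivation property I would verify
$$
	\Delta_{U(C)} \circ f = (\mathrm{id} \otimes f)\circ \rho_L + (f \otimes \mathrm{id}) \circ \rho_R,
$$
where $\rho_L(h\otimes v) = h_{(1)} \otimes (h_{(2)} \otimes v)$ and $\rho_R(h \otimes v) = (h_{(1)} \otimes v_{(0)}) \otimes h_{(2)} v_{(1)}$ are the left and right coactions extracted from the tetramodule coaction by applying the counit to the outer tensor factors. The left-hand side equals $h_{(1)} q(v_{(1)}) \otimes h_{(2)} q(v_{(2)})$ because $q$ is a morphism of coalgebras, while the right-hand side, expanded using the explicit form $v_{(0)} \otimes v_{(1)} = (v_{(1)} - \epsilon(v_{(1)})1) \otimes q(v_{(2)})$ of the coaction together with $q(1) = 1$, yields $h_{(1)} \otimes h_{(2)}q(v)$ plus a term that reproduces the left-hand side minus $h_{(1)} \otimes h_{(2)}q(v)$, so the two sides match. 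This cancellation, which holds because the Yetter-Drinfel'd coaction was built from the coproduct and $q$, is the technical heart of the argument. I would also record the counit compatibility $\epsilon_{U(C)} \circ f = 0$, which holds since $\epsilon(q(v)) = \epsilon(v) = 0$ for $v \in \check C$ and explains why $\check C$, rather than $C$, is the correct comodule.

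The main obstacle I anticipate is organisational rather than computational: confirming that the definition of a bialgebra object in $\mathcal{LM}$ from \cite{KW} unfolds to exactly the bimodule-map and coderivation conditions on $f$ written above, so that the tetramodule axioms may be imported wholesale from \cite{KS}. Once this dictionary is fixed, both conditions reduce to the Yetter-Drinfel'd rack axioms and the coalgebra-morphism property of $q$, and no genuinely new computation beyond those already carried out in Proposition~\ref{proposition3} is needed.
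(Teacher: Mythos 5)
Your proposal is correct and follows the same route as the paper: it combines Proposition~\ref{proposition3} and Theorem~\ref{theorem_braided_Leibniz} to make $\check C$ a Yetter-Drinfel'd submodule, imports the tetramodule structure on $U(C)\otimes\check C$ from \cite[Section~13.1.3]{KS}, and checks that $s\otimes c\mapsto s.q(c)$ is a bilinear coderivation. The only difference is that you write out explicitly (and correctly) the bilinearity and coderivation computations that the paper dismisses as ``straightforwardly verified''.
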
 
\begin{proof}
In light of
Proposition~\ref{proposition3}
and
Theorem~\ref{theorem_braided_Leibniz}, any
cocommutative rack bialgebra $C$ becomes
a Yetter-Drinfel'd module over 
$H=U(C)$. Furthermore, the
decomposition 
$C=k \cdot 1 \oplus \check C$ is a
direct sum of Yetter-Drinfel'd modules. 
Thus
$M:=U(C) \otimes \check C$ is a Hopf
tetramodule. That the linear map given
by $ s \otimes c \mapsto sq(c)$
is a coderivation and a bimodule map is
straightforwardly verified. 
\end{proof}

\begin{beispiel}
Note that the bialgebra $U(C)$ is not a
Hopf algebra in general, i.e.
does not necessarily have an antipode. 
For example, for $C=k \cdot 1 \oplus 
k \cdot g$ with 
$\triangle_C(g)=g\otimes g$ and 
$g \lhd g = g$, we
obtain for $U(C)$ a polynomial algebra in 
one group-like generator which does not 
have an antipode. This rack bialgebra $C$
is the counitisation of the 
linearisation of the conjugation
rack of the trivial group.
In general, if $C=\widehat{kX}$ 
for a rack $X$ as in
Example~\ref{example_shelf}, then 
the group algebra
of the associated group of the rack $X$
(see \cite{FennRourke} for definitions)
is obtained by localisation of $U(C)$ at
all group-likes. 
For the rack bialgebra $C$ with
group-like basis 
$1,g$ and $g \lhd g=1$, $U(C)$ is
the bialgebra with one group-like
generator $g$ satisfying $g^2=g$, 
so $U(C) \cong k \oplus k$ as algebra. 
If $U(C)$ does admit
an antipode, then at least over an
algebraically closed field of
characteristic 0 it is as a Hopf algebra 
isomorphic to a semidirect product of a
group algebra and a universal enveloping
algebra of a Lie algebra (see
e.g.~\cite[Theorem~3.8.2]{C}).  
Note further that there is an omission  
in \cite[Lemma 4.8]{KW} as the last
statement only makes sense when $H$ is a
Hopf algebra. 
\end{beispiel} 

\begin{beispiel}\label{beispiel_Leibniz}
If $C= k \cdot 1 \oplus  \mathfrak{h} $
is the rack bialgebra associated to a
Leibniz algebra $ \mathfrak {h}$ as in 
Example~\ref{example_Leibniz_algebra},
then $U(C)$ is the universal enveloping
algebra of the Lie algebra
$\mathfrak{h}_\mathrm{Lie}$ associated   
to $ \mathfrak{h}$, that is, the
quotient by the Leibniz ideal generated
by all squares $[x,x]$. Indeed, 
the generators of the ideal $J$ are in
this case of the form
$$
	[x,y] + y.x -x.y,
$$
and $J$ contains in particular all squares
$[x,x]$. The bialgebra object in 
$\mathcal{LM}$ obtained in
Theorem~\ref{theorem_bialgebra_object} is the
universal enveloping algebra of
the Lie algebra object 
$(\mathfrak{h},\mathfrak{h}_\mathrm{Lie})$ 
as in
\cite[Definition~4.3]{LP}. Thus
Theorem~\ref{theorem_bialgebra_object}
extends the construction of the
universal enveloping algebra of a
Leibniz algebra. 
\end{beispiel}

\begin{beispiel}
Let $ \mathfrak{g} $ be a Lie algebra, 
$H=U(\mathfrak{g} )$ be its
universal enveloping algebra, and 
$C \subset H$ be the image of 
$ k \oplus \mathfrak{g} \oplus
\mathfrak{g} \otimes \mathfrak{g} $,
that is, the degree 2 part in
the PBW filtration. This is a rack
bialgebra following the construction in
Proposition~\ref{husten} (starting with 
$\check C \subset \mathrm{ker}\,
\epsilon_H$). The symmetric algebra
$U(C)$ is not $H$, but 
$U( \mathfrak {g} \oplus S^2
\mathfrak{g})$, where $S^2 \mathfrak{g}
$ are the symmetric 2-tensors over $
\mathfrak{g} $, viewed as abelian Lie
algebra, and the direct sum is a direct
sum of Lie algebras. The kernel of 
$u \colon U(C) \rightarrow H$ 
is the ideal generated by $ S^2
\mathfrak{g} $. 
\end{beispiel}

\begin{bemerkung}
Before we continue, let us point out
that $U(C)$ differs from the Nichols
algebra associated to the braided vector
space $(\check C,\tau)$. The latter can
also be defined as a quotient algebra of
$T$, but with homogeneous relations in
degrees that can be of arbitrary degree,
cf.~\cite{V} for a pedagogical
introduction and original references. 
In contrast, the generators of $J$ are
in general inhomogeneous involving terms
of degree two and one. They are
homogeneous if and only if $ \lhd $
vanishes. In this case $ \tau $ is the
tensor flip and $ U(C)$ is the classical
symmetric algebra of the vector space
$\check C$. This is the only case in
which $U(C)$ agrees with the Nichols
algebra. 
\end{bemerkung}

\section{A non-cocommutative example}
\label{subsection_example}
Up to now, all examples of rack
bialgebras were cocommutative, and this
was an essential assumption in our
results. 
Also in our main reference \cite{CCES},
all examples were cocommutative (note
that the examples in Lemma~3.8 and
Lemma~3.9 therein are isomorphic
to each other). 
In this section, we present a
non-cocommutative example of a rack
bialgebra that nevertheless admits 
the structure of a Yetter-Drinfel'd rack
and can be constructed from a bialgebra object
in ${\mathcal L}{\mathcal M}$. 

\begin{proposition}\label{beispielnc}
Let 
$C={\rm Vect}(1,x,y,z,t)$ be the
coalgebra in which  
$t,y,z$ are primitives and 
$
	\triangle(x)\,=\,
	1 \otimes x+ x\otimes 1 + y\otimes z.
$
Then $C$ carries a unique rack bialgebra 
structure in which   
$-\lhd x,- \lhd t \colon C\to C$
are zero and 
\begin{align*}
& x\lhd z=t,\quad 
	x\lhd y=t,\quad 
	z\lhd z=0,\quad
	z\lhd y=0,\\ 
& y\lhd z=0,\quad
	y\lhd y=0,\quad 
	t\lhd z=0,\quad 
	t\lhd y=0. 
\end{align*}
\end{proposition}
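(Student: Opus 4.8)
The plan is to verify that the proposed operations define a genuine rack bialgebra structure, by checking each of the defining axioms from the opening definition. Since $C$ comes with a fixed coalgebra structure (with $t,y,z$ primitive, $x$ of the stated grouplike-plus-correction type, and $1$ the coaugmentation), the work is entirely about the product $\lhd$. First I would extend the table to a full bilinear map $C\otimes C\to C$ using the counital axioms $a\lhd 1=a$ and $1\lhd a=\epsilon(a)1$, together with the vanishing of $-\lhd x$ and $-\lhd t$; this fixes every product $a\lhd b$ for $a,b$ running over the basis $1,x,y,z,t$, so uniqueness is immediate once I show the axioms hold. The counit $\epsilon$ is determined by $\epsilon(1)=1$ and $\epsilon(x)=\epsilon(y)=\epsilon(z)=\epsilon(t)=0$, and the compatibility $\epsilon(a\lhd b)=\epsilon(a)\epsilon(b)$ is a routine case check, since every nontrivial product lands in $\mathrm{span}(t)$, which has counit zero.

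Next I would verify that $\lhd$ is a morphism of coalgebras, i.e.\ that $\triangle(a\lhd b)=(a_{(1)}\lhd b_{(1)})\otimes(a_{(2)}\lhd b_{(2)})$. This is where the chosen coproduct of $x$ interacts nontrivially with the product, so it is the first place the computation could fail. The main subtlety is the product $x\lhd z=t$ and $x\lhd y=t$: on the left one computes $\triangle(t)=t\otimes 1+1\otimes t$, while on the right one must expand $\triangle(x)=1\otimes x+x\otimes 1+y\otimes z$ and $\triangle(z)=1\otimes z+z\otimes 1$ (resp.\ $\triangle(y)$), multiply termwise using the table, and confirm the correction term $y\otimes z$ contributes exactly what is needed. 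I expect the identities $z\lhd z=z\lhd y=y\lhd z=0$ to be precisely what make the extra cross-terms cancel, so this step is really a consistency check that the table was designed correctly.

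The core of the proof is the self-distributivity relation \eqref{selfdistributive}, namely $(a\lhd b)\lhd c=(a\lhd c_{(1)})\lhd(b\lhd c_{(2)})$ for all basis triples. I would organise this by the second argument $c$. If $c\in\{1,x,t\}$ the relation collapses quickly: for $c=1$ both sides reduce to $a\lhd b$; for $c=x$ or $c=t$ the operators $-\lhd x$ and $-\lhd t$ vanish, so the left side is zero and one checks the right side vanishes too (using that $y\lhd-$ and $z\lhd-$ either vanish or feed into another vanishing operator). The substantial cases are $c=y$ and $c=z$, where $\triangle(c)=1\otimes c+c\otimes 1$ is primitive, so the right-hand side becomes $(a\lhd 1)\lhd(b\lhd c)+(a\lhd c)\lhd(b\lhd 1)=a\lhd(b\lhd c)+(a\lhd c)\lhd b$; comparing this with $(a\lhd b)\lhd c$ reduces self-distributivity to a finite Leibniz-type check on the five basis elements.

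I expect the main obstacle to be bookkeeping rather than conceptual: because $x$ is \emph{not} primitive, the Sweedler expansions in the coalgebra-morphism step and in the $c=y,z$ cases of self-distributivity produce several terms, and one must be careful to track every summand from the $y\otimes z$ correction. The cleanest way to contain this is to note that $\mathrm{im}\,\lhd\subseteq\mathrm{span}(t)$ and that $t$ is annihilated by every operator $-\lhd b$, so any doubly-applied product vanishes; this observation kills most terms in the self-distributivity and morphism computations before they are written out, leaving only a handful of genuinely nonzero identities to confirm. Uniqueness then follows because the axioms force the value of $a\lhd b$ on all basis pairs once the displayed table, the counital conditions, and the vanishing of $-\lhd x,-\lhd t$ are imposed.
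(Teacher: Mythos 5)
Your proposal is correct and follows essentially the same route as the paper: a case-by-case verification in which self-distributivity reduces to the observation that any twice-iterated product of elements of $\check C$ vanishes (since $\mathrm{im}\,\lhd\subseteq k\cdot t$ and $-\lhd t=0$), and the coalgebra-morphism property reduces to the explicit computation at $a=x$, $b\in\{y,z\}$ together with the remark that no coproduct contains an $x\otimes x$ component (which handles $b=x$). The only difference is emphasis: the paper organises the morphism check by the second argument $b$ and spells out the $b=x$ case, which your plan subsumes under the general termwise expansion.
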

\begin{proof}
The self-distributivity 
$$ 
	(a \lhd b) \lhd c \,=\, (a \lhd c_{(1)}) \lhd (b \lhd c_{(2)})
$$
is clear if one of the three elements is 1.
Otherwise, it follows from the fact that
$ (a \lhd b) \lhd c=0$ for 
$a,b,c \in \check C$, as then both sides
of the equation vanish.

Now let us check that
$$
	(a \lhd b)_{(1)} \otimes (a \lhd b)_{(2)} 
	= (a_{(1)} \lhd b_{(1)}) \otimes (a_{(2)} 
	\lhd b_{(2)}).
$$
For $b=1$, both sides are equal. 
For primitive $b$, the equation reads: 
$$(a \lhd b)_{(1)} \otimes (a \lhd b)_{(2)} = (a_{(1)} \lhd b) \otimes a_{(2)} 
+a_{(1)} \otimes (a_{(2)} \lhd b).$$
For $b=t$, both sides are clearly zero.
For $b=y$ or $b=z$, the only non-trivial case is $a=x$. We have for $a=x$ and $b=y$ for the LHS: 
$$
	(x \lhd y)_{(1)} \otimes (x \lhd y)_{(2)} = 
	t_{(1)}\otimes t_{(2)} = 
	1 \otimes t+ t\otimes 1.
$$
We have for $a=x$ and $b=y$ for the RHS:
\begin{eqnarray*}
	(x_{(1)} \lhd y) \otimes x_{(2)} +
	x_{(1)} \otimes (x_{(2)} \lhd y) 
&=& (1 \lhd y) \otimes x +
	(x \lhd y) \otimes 1 \\
&+& 1 \otimes (x \lhd y) + 
	x \otimes (1 \lhd y) \\
&+& (y \lhd y) \otimes z + 
	y \otimes (z \lhd y) \\
&=& t\otimes 1 + 1 \otimes t.
\end{eqnarray*} 
The case $a=x$ and $b=z$ is similar. The last case is the case $b=x$. 
The equation reads then:
$$ 0 = a_{(1)} \otimes (a_{(2)} \lhd x) +
(a_{(1)} \lhd x) \otimes a_{(2)} + (a_{(1)} \lhd y) \otimes (a_{(2)} \lhd z).$$
The first two terms (and the LHS) are zero, 
because $-\lhd x$ is zero. Concerning the third
term, the only case where it is non-zero is 
when $a_{(1)} \otimes a_{(2)}$ contains
a non-zero component proportional to 
$ x \otimes x$. However, there is no 
$a$ with this property in $C$. 
\end{proof}

Since $C$ is not cocommutative,
Theorem~\ref{theorem_braided_Leibniz}
can not be applied to construct a
canonical Yetter-Drinfel'd rack
structure. However, $C$ is a
Yetter-Drinfel'd rack over the
coordinate ring of the upper triangular
unipotent group in $GL(3)$:

\begin{proposition}
Let $H$ be the Hopf algebra whose
underlying algebra is the polynomial ring 
$k[X,Y,Z]$ with the coproduct 
$$
	\triangle(X) = 
	1 \otimes X + X \otimes 1 + Y \otimes
	Z,
$$
$$
	\triangle (Y) = 
	1 \otimes Y + Y \otimes 1,\quad
	\triangle (Z) = 
	1 \otimes Z + Z \otimes 1.
$$ 
Then the rack bialgebra $C$ from
Proposition~\ref{beispielnc} becomes a
Yetter-Drinfel'd rack over $H$ with 
$q(x)=X,q(y)=Y,q(z)=Z,q(t)=0$.  
\end{proposition}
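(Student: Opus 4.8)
The plan is to produce the three data required by Definition~\ref{def-ydr} and then to check the compatibilities (\ref{c}) and (\ref{d}). I would let (\ref{c}) \emph{define} the action on the algebra generators of $H$: since $q(x)=X$, $q(y)=Y$, $q(z)=Z$, condition (\ref{c}) forces $a\cdot X=a\lhd x$, $a\cdot Y=a\lhd y$ and $a\cdot Z=a\lhd z$, and the table in Proposition~\ref{beispielnc} gives that $-\cdot X$ is zero while $-\cdot Y$ and $-\cdot Z$ both send $x\mapsto t$ and annihilate $1,y,z,t$. The first thing to verify is that these three operators pairwise commute; in fact each of their pairwise products is the zero map, since each operator sends $x$ to $t$ or to $0$ and $t$ is then annihilated. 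As $H=k[X,Y,Z]$ is the free commutative algebra on $X,Y,Z$, the assignment therefore extends uniquely to a right $H$-action on $C$ in which every monomial of degree $\ge 2$ acts as zero.

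Next I would record that $q$ is a morphism of counital coaugmented coalgebras: the only non-primitive coproduct is that of $x$, and $(q\otimes q)\triangle(x)=1\otimes X+X\otimes 1+Y\otimes Z=\triangle(X)$ by construction, while $q$ evidently preserves the counit and sends $1\mapsto 1$. The $H$-module coalgebra axioms then need not be checked by hand: by the Remark following Proposition~\ref{proposition3} they hold automatically once $H$ is generated as an algebra by $\mathrm{im}\,q$, which here it is, since $\mathrm{im}\,q\supseteq\{1,X,Y,Z\}$; the only input is that $\lhd$ is a morphism of coalgebras, already established in Proposition~\ref{beispielnc}. Condition (\ref{c}) holds by construction on the coalgebra generators, the two remaining checks being $a\lhd x=0=a\cdot q(x)$ and $a\lhd t=0=a\cdot q(t)$, both immediate from $-\lhd x=-\lhd t=0$ and $q(t)=0$.

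The substance of the proof is condition (\ref{d}). The structural key is that (\ref{d}) is multiplicative in $h$: it holds for $h=1$, and if it holds for $h$ and $h'$ (for every $a\in C$), then, applying it first for $h'$ to the element $a\cdot h_{(2)}$ and then for $h$, one obtains
\begin{align*}
(hh')_{(1)}\,q\big(a\cdot(hh')_{(2)}\big)
&=h_{(1)}h'_{(1)}\,q\big((a\cdot h_{(2)})\cdot h'_{(2)}\big)\\
&=h_{(1)}\,q(a\cdot h_{(2)})\,h'\\
&=q(a)\,hh'.
\end{align*}
Hence it suffices to verify (\ref{d}) for $h\in\{X,Y,Z\}$. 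For the primitive generators $Y$ and $Z$, condition (\ref{d}) reduces to $q(a\cdot Y)+Y\,q(a)=q(a)\,Y$ (and the analogue for $Z$); the first term vanishes because $a\cdot Y\in kt=\ker q$, and commutativity of $H$ settles the rest.

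The main obstacle is the remaining generator $X$, whose coproduct carries the extra term $Y\otimes Z$. Using $a\cdot X=0$ and commutativity of $H$, condition (\ref{d}) for $h=X$ reduces to the single requirement $Y\,q(a\cdot Z)=0$ for all $a$, and this holds precisely because the action lands $a\cdot Z$ in $kt=\ker q$. This is the point at which the construction is engineered to succeed: the non-primitivity of $X$ would otherwise obstruct (\ref{d}), but $q$ is chosen to annihilate $t$, which is exactly the image of $-\lhd z$. With the three generators verified, multiplicativity propagates (\ref{d}) to all of $H$, completing the proof that $C$ is a Yetter-Drinfel'd rack over $H$.
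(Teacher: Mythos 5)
Your proposal is correct and follows essentially the same route as the paper's proof: establish that $q$ is a coalgebra morphism (because $kt$ is a coideal), obtain the $H$-action from the pairwise commutativity of $-\lhd x$, $-\lhd y$, $-\lhd z$ on the free commutative algebra $k[X,Y,Z]$, and verify the module-coalgebra condition and (\ref{d}) on the generators $X,Y,Z$ before extending multiplicatively. The paper leaves the generator computations as "direct computation," whereas you write out the multiplicativity of (\ref{d}) and the key cancellations $q(a\cdot Y),q(a\cdot Z)\in q(kt)=0$ explicitly, and you correctly invoke the paper's own remark to dispense with the module-coalgebra check; these are welcome details but not a different argument.
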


\begin{proof}
The map $q$ is evidently a morphism of
coalgebras as $k \cdot t$ is a coideal. 
As the linear maps 
$ - \lhd x,- \lhd y,- \lhd z \colon C
\rightarrow C$ commute with each other,
there is a
well-defined right $H$-module structure 
on $C$ such that $a \lhd b = a \cdot
q(b)$. The $H$-module coalgebra
condition 
$(c \cdot h)_{(1)} \otimes (c \cdot h
)_{(2)} =  
(c_{(1)} \cdot h_{(1)}) \otimes 
(c_{(2)} \cdot h_{(2)})$   
and (\ref{d}) 
is verified by direct
computation when $h$ is one of the
generators $X,Y,Z$ and hence holds for
all $h \in H$.  
\end{proof}

Thus $C$ can be constructed as in
Corollary~\ref{schnupfen} inside the
Hopf algebra $T(\check C) \rtimes H$ 
despite the fact that it is
non-cocommutative, and as well from the
corresponding Hopf algebra object in
$\mathcal{LM}$.   

\section{Deformation cohomology}

In \cite{CCES}, the authors define 
cohomology groups controlling the
deformation theory of linear shelves. The method is to define the operations (i.e. the coproduct and the shelf product) on 
$C[[t]]$ instead of $C$, for a formal
parameter $t$, and then to impose
self-distributivity and the coalgebra
morphism condition on $\lhd$ as well as
the coassociativity of $ \triangle$. 
These requirements up to $t^{n+1}$ then give 
cocycle identities up to $t^{n+2}$. 
They are realised in a bicomplex whose
differentials are given explicitly 
up to degree 3, see Section~6 in
\cite{CCES} for details. 

On the other hand, in \cite{ABRWI} a
deformation complex for {\it
cocommutative} rack bialgebras $C$ is
defined. Therein, the deformations
involve only the shelf product, and not
the underlying coalgebra. Cochains are
defined to be coderivations with respect
to iterates of the shelf product. In
\cite{ABRW}, \cite{ABRWI}, left shelves
and Leibniz algebras are considered, so
we transpose the definitions here to
right shelves and Leibniz algebras.

Let $C$ be a rack bialgebra with a
cocommutative underlying coalgebra. Then
the rack product
$\mu(x,y):=x\lhd y$ can be iterated to
$$
	\mu^n(x_{1},\ldots,x_n):=(\ldots(x_{1}\lhd x_{2})\lhd\ldots)\lhd x_n,
$$
with the convention that $\mu^1=\id$ and $\mu^2=\mu$. 

Note that in the following definition we
moved the Sweedler notation to the top
to avoid confusion with the other
indices. 
   
\begin{definition}
Let $C$ be a rack bialgebra with a cocommutative underlying coalgebra. 
The deformation complex of $C$ is the graded vector space $C^*(C;C)$ defined in 
degree $n$ by
\[
C^n(C;C)\,:=\,\text{\rm Coder}(C^{\otimes n},C,\mu^n)
\]
denoting the space of coderivations along $\mu^n$, i.e. of linear maps $\omega:C^{\otimes n}\to C$ such that
$$\triangle_C\circ\omega=(\omega\otimes\mu^n+\mu^n\otimes\omega)\circ\triangle_{C^{\otimes n}},$$
endowed with the differential $d_C:C^*(C;C)\to C^{*+1}(C;C)$ defined in degree $n$ by
\[
d_C^n:=\sum_{i=1}^{n} (-1)^{i+1}(d_{i,1}^n-d_{i,0}^n)\:+\:(-1)^{n+1}d^n_{n+1}
\]
where the maps $d_{i,1}^n$ and $d_{i,0}^n$ are defined respectively by
\begin{multline*}
d_{i,1}^n\omega (r_{1},\ldots,r_{n+1}):=\\
\omega(r_{1},\ldots,r_{i-1},r_{i+1}^{(1)},\ldots,r_{n+1}^{(1)})\lhd
\mu^{n-i+2}(r_i,r_{i+1}^{(2)},\ldots,r_{n+1}^{(2)})
\end{multline*}
and
\[
d_{j,0}^n\omega (r_{1},\ldots,r_{n+1}):= 
\omega(r_{1}\lhd r_j^{(1)},\ldots,r_{j-1}\lhd r_j^{(j-1)},r_{j+1},\ldots,r_{n+1})
\]
and $d_{n+1}^n$ by
\begin{eqnarray*}
\lefteqn{  d_{n+1}^n\omega (r_{1},\ldots,r_{n+1})  }  \\ &:=& 
\mu^n(r_{1},r_{3}^{(1)}\ldots,r_{n+1}^{(1)})\lhd\omega(r_{2},r_{3}^{(2)},\ldots,r_{n+1}^{(2)})
\end{eqnarray*}
for all $\omega$ in $C^n(C;C)$ and $r_{1},\ldots,r_{n+1}$ in $C$. 
\end{definition}

It is shown in \cite{ABRWI} that $d_C^n\circ d_C^{n-1}=0$ and that $d_C^n$ sends coderivations to coderivations. 

\begin{bemerkung}
Cochains in the cohomology in Section 6 of \cite{CCES} are maps $C^{\otimes i}\to C^{\otimes j}$, like in the Gerstenhaber-Schack cohomology of associative bialgebras. Defining {\it special cochains} as 
$(\eta_{1},0,\ldots,0)$ where $\eta_{1}:C^{\otimes n}\to C$, one obtains that cocycles in $C^*(C,C)$
as above give rise to special cocycles. Indeed, while the first cocycle identity in \cite{CCES} is just the cocycle identity with respect to the above coboundary operator $d_C^n$, the second cocycle identity 
is the coderivation property and all other identities are trivial.  
\end{bemerkung}

\begin{beispiel}
The rack bialgebra $C={\rm Vect}(1,x,y,z,t)$ defined in Section \ref{subsection_example}
is a first order deformation in the
sense of the cohomology defined in
\cite{CCES} of the cocommutative shelf
in coalgebras $C_{0}={\rm Vect}(1,x,y,z,t)$
where $x,y,z,t$ are primitives and 
$1$ is group-like. 
The cocycle associated to the deformation is $\omega:C\to C^{\otimes 2}$ given by
$$\omega(x)=y\otimes z$$
and is trivial on the other basis
elements. Clearly, $C$ is not a deformation in the sense of the cohomology defined in \cite{ABRWI} as in this complex, the coproduct is not deformed. 
\end{beispiel}

We come to the main theorem of this section:

\begin{theorem}
Consider the rack bialgebra $C=k\oplus{\mathfrak h}$ associated to a (right) Leibniz algebra 
${\mathfrak h}$, see Example \ref{example_Leibniz_algebra}.
The Leibniz cohomology complex with values in the adjoint representation embeds into
the deformation complex $(C^*(C;C),d_C^*)$ defined above (by \cite{ABRWI}). 
\end{theorem}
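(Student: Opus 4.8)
The plan is to build an injective cochain map $\Phi$ from the Loday complex $CL^\bullet(\mathfrak h;\mathfrak h)=\mathrm{Hom}(\mathfrak h^{\otimes\bullet},\mathfrak h)$, which computes Leibniz cohomology with adjoint coefficients, to the deformation complex $(C^\bullet(C;C),d_C)$ of $C=k\oplus\mathfrak h$. The point of contact is that both sides are controlled by maps out of tensor powers of $\mathfrak h$: a Loday $n$-cochain is a map $\mathfrak h^{\otimes n}\to\mathfrak h$, while a deformation $n$-cochain is a coderivation $\omega\colon C^{\otimes n}\to C$ along $\mu^n$. I would first analyse the latter for $C=k\oplus\mathfrak h$ with $\mathfrak h$ primitive and $1$ group-like. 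Since the comultiplication on $C^{\otimes n}$ only ever distributes primitives and units, the coderivation identity forces $\omega$ to take values in $\mathfrak h$ on an all-primitive tensor; thus every $\omega$ has a well-defined \emph{principal part} $\underline\omega\in\mathrm{Hom}(\mathfrak h^{\otimes n},\mathfrak h)$, and is otherwise recorded by its values on tensors some of whose slots equal $1$.

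I would then define, for a Loday cochain $\phi$, a coderivation $\Phi(\phi)$ along $\mu^n$ whose principal part is $\phi$, fixing its values on the unit-containing tensors by solving the coderivation identity slot by slot (equivalently, by an explicit formula that contracts the omitted slots through $\mu^k$ and $\lhd$). Injectivity of $\Phi$ is then immediate, since the principal part of $\Phi(\phi)$ recovers $\phi$. I would stress that these unit-slot values are genuinely nonzero and carry essential information: the tempting extension of $\phi$ by zero is a coderivation but is \emph{not} a cochain map, as one already sees in low degree, where the differential of such an extension contains a spurious ``variable-dropping'' term and fails to reproduce the Leibniz coboundary.

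Finally I would verify $d_C\circ\Phi=\Phi\circ\delta$ by evaluating both sides on primitive arguments $r_1,\dots,r_{n+1}\in\mathfrak h$. As each $r_i$ is primitive, every iterated coproduct in the formulas for $d_{i,1}^n$, $d_{i,0}^n$ and $d_{n+1}^n$ is a sum of unit insertions, and the identities $x\lhd 1=x$, $1\lhd x=0$ together with the behaviour of $\mu^k$ on units collapse these Sweedler sums. In this expansion $d_{i,1}^n$ and $d_{n+1}^n$ yield the adjoint-action terms of the Loday differential, $d_{i,0}^n$ yields part of the bracket-insertion terms, and the non-principal Sweedler contributions---which survive precisely because the unit-slot values of $\Phi(\phi)$ are nonzero---supply the remaining bracket-insertions. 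I expect the main obstacle to be exactly this reconciliation: determining the unit-slot values of $\Phi(\phi)$, matching the degrees correctly, and checking that the degenerate ``dropping'' contributions cancel while the non-principal terms reassemble into the missing bracket-insertions, so that after collecting signs the two differentials agree.
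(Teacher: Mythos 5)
Your setup is sound as far as it goes: coderivations along $\mu^n$ do take values in $\ker\epsilon=\mathfrak h$, every such $\omega$ has a well-defined principal part in $\mathrm{Hom}(\mathfrak h^{\otimes n},\mathfrak h)$, and injectivity of any extension map follows from recovering the principal part. The gap is your central claim that the extension of a Loday cochain by zero on all tensors with a slot in $k\cdot 1$ is a coderivation but \emph{not} a cochain map, so that nonzero unit-slot values are needed. This is the opposite of what happens, and the paper's proof is exactly the zero extension $\omega((\lambda_1,x_1),\dots,(\lambda_n,x_n)):=\mathrm{pr}_{\mathfrak h}(f(x_1,\dots,x_n))$. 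When $d_C^n$ is evaluated on such an $\omega$ at primitive arguments, each piece collapses to a single Sweedler term: in $d^n_{i,1}$ only the term with all primitives inside $\omega$ survives (every other term puts a unit into a slot of $\omega$ and hence vanishes), giving $[\omega(r_1,\dots,\hat r_i,\dots,r_{n+1}),r_i]$; in $d^n_{j,0}$ the iterated coproduct of the primitive $r_j$ has exactly one non-unit leg per summand, which directly produces the bracket-insertion terms $\omega(\dots,[r_k,r_j],\dots)$ with no help from unit-slot values; and $d^n_{n+1}$ gives $[r_1,\omega(r_2,\dots,r_{n+1})]$. The ``variable-dropping'' terms you fear are all annihilated by $\omega(\dots,1,\dots)=0$, by $\epsilon(r)=0$, or by $1\lhd r=\epsilon(r)1=0$ for $r\in\mathfrak h$; nothing spurious survives and nothing is missing.

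Beyond resting on a false premise, your replacement construction does not exist as described. The coderivation identity along $\mu^n$ does \emph{not} determine the unit-slot values from the principal part: already for $n=1$, any primitive choice of $\omega(1)$ extends the same $f$ to a coderivation, so ``solving the coderivation identity slot by slot'' leaves $\Phi(\phi)$ undefined without further (unspecified) choices. Worse, the mechanism you propose for the final verification cannot work on shape grounds: a non-principal contribution to $d^n_{i,1}$ has the form $\omega(\dots,1,\dots)\lhd\mu^{n-i+2}(\dots)$, i.e.\ an outermost bracket applied to a value of $\omega$ on fewer than $n$ of the primitive variables, and no such expression can equal a bracket-insertion term $\phi(r_1,\dots,[r_k,r_j],\dots,r_{n+1})$, which has $\phi$ outermost and depends on all $n+1$ variables. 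The bracket insertions come from $d^n_{j,0}$ alone. To repair the proof, drop the claim about the failure of the zero extension, take $\Phi$ to be the zero extension, verify the coderivation property (which you essentially already have), and carry out the term-by-term evaluation above.
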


\begin{proof}
We extend Leibniz cochains $f:{\mathfrak h}^{\otimes n}\to{\mathfrak h}$ to cochains in the complex
$C^*(C,C)$ with $C=k \cdot 1
\oplus{\mathfrak h}$ by setting them
zero on all components in $k \cdot 1 \subset C$. 
More precisely
$$
	\omega((\lambda_{1},x_{1}),\ldots,(\lambda_n,x_n)):={\rm pr}_{\mathfrak h}(f(x_{1},\ldots,x_n)),
$$
for $(\lambda_i,x_i)\in k \cdot 1 \oplus{\mathfrak h}$ for all $i=1,\ldots,n$ and with 
${\rm pr}_{\mathfrak h}:k \cdot 1 \oplus{\mathfrak h}\to{\mathfrak h}$ the natural projection.   

With this definition, it follows that these cochains are coderivations along $\mu^n$, i.e.
$$\triangle_C\circ\omega=(\omega\otimes\mu^n+\mu^n\otimes\omega)\circ\triangle_{C^{\otimes n}}.$$
Indeed, when computing the iterated coproduct $\triangle_{C^{\otimes n}}(r_{1},\ldots,r_n)$, the elements 
$r_i\in{\mathfrak h}$ are distributed
among the two factors in $C^{\otimes
n}\otimes C^{\otimes n}$ and all other
components are filled with units. On the LHS, $\omega(r_{1},\ldots,r_n)$ is primitive by construction,
thus we get the two terms $\omega(r_{1},\ldots,r_n)\otimes 1+1\otimes\omega(r_{1},\ldots,r_n)$.
On the RHS, the only terms which do not
vanish are those with all $r_i$ as arguments in $\omega$.
This shows the equality. 

Now we specify the different parts of the coboundary operator.  

\begin{multline*}
d_{i,1}^n\omega (r_{1},\ldots,r_{n+1})=\\
\omega(r_{1},\ldots,r_{i-1},r_{i+1}^{(1)},\ldots,r_{n+1}^{(1)})\lhd
\mu^{n-i+2}(r_i,r_{i+1}^{(2)},\ldots,r_{n+1}^{(2)})    \\
=[\omega(r_{1},\ldots,r_{i-1},r_{i+1},\ldots,r_{n+1}),r_i],
\end{multline*}
because the only contributing term is the one where all $r_j$ are arguments of $\omega$, i.e. 
all the units are in $\mu^{n-i+2}$. 

\begin{eqnarray*}
d_{j,0}^n\omega (r_{1},\ldots,r_{n+1})&=& 
\omega(r_{1}\lhd r_j^{(1)},\ldots,r_{j-1}\lhd r_j^{(j-1)},r_{j+1},\ldots,r_{n+1})  \\
&=&\omega([r_{1},r_j],\ldots,r_{j-1},r_{j+1},\ldots,r_{n+1}) +\ldots \\
\ldots&+&\omega(r_{1},\ldots,[r_{j-1},r_j],r_{j+1},\ldots,r_{n+1}),  
\end{eqnarray*}
because only one of the $r_j^{(k)}$ is equal to $r_j$ and all others are equal to $1$. 

\begin{eqnarray*}
\lefteqn{  d_{n+1}^n\omega (r_{1},\ldots,r_{n+1})  }  \\ &=& 
\mu^n(r_{1},r_{3}^{(1)}\ldots,r_{n+1}^{(1)})\lhd\omega(r_{2},r_{3}^{(2)},\ldots,r_{n+1}^{(2)})  \\
&=&[r_{1},\omega(r_{2},r_{3},\ldots,r_{n+1})],
\end{eqnarray*}
because this is the only term where one does not insert $1$ into $\omega$. 
\end{proof} 

\begin{bemerkung}
A similar statement is true for the rack bialgebra $C$ associated to a (set-theoretical) shelf, cf Example
\ref{example_shelf}.
In fact, the deformation complex in \cite{ABRWI} has been constructed as a linearization of the cohomology complex of a shelf. 
\end{bemerkung}

\section{Outlook and further questions}
One natural direction of further
research is to link the three different 
approaches to deformation theories of 
rack bialgebras:
\begin{enumerate}
\item[(i)] The bicomplex of 
Carter-Crans-Elhamdadi-Saito \cite{CCES}.
\item[(ii)] The Gerstenhaber-Schack
bicomplex in ${\mathcal L}{\mathcal M}$
\cite{GS}. 
\item[(iii)] The braided Leibniz 
complex \cite{Leb}. 
\end{enumerate}

Our previous article \cite{KW} showed
how objects in the setting (ii) can be
expressed as objects in the setting
(iii). The present article provides the
link from (i) to (ii) and from (i) to 
(iii) in the cocommutative case. 
A deformation theory for cocommutative
rack bialgebras alone should however
rather be built on the
dual version of Andr\'e-Quillen cohomology 
(the cohomology theory that controls
deformations of commutative algebras)
than the Cartier cohomology that
underlies Gerstenhaber-Schack
cohomology. 

In general, Gerstenhaber-Schack cohomology 
of a bialgebra $f \colon M \rightarrow
H$ in ${\mathcal L}{\mathcal M}$ 
captures more information than the
cohomology from \cite{CCES}, as one can
deform $H,M$ and $f$. On
the other hand, braided Leibniz
cohomology seems to capture less
information than \cite{CCES}, because it 
only indirectly reflects deformations of the 
coproduct. 
A full investigation of the relation
between the three settings seems a
fruitful future research direction.

Two rather concrete questions that arises
from this article are:

\begin{question}
Is there a (necessarily
non-cocommutative) rack bialgebra
that can not be expressed as a
Yetter-Drinfel'd rack over any
bialgebra?
\end{question}

\begin{question}
For which rack bialgebras is $U(C)$ a
Hopf algebra? 
\end{question}

This matters in the passage from 
(ii) to (i) as the antipode is in
general necessary to turn the 
coinvariants in a Hopf tetramodule into
a right module. One expects that $U$ is
then part of an adjoint pair of functors
between such Hopf racks and Hopf
algebras, at least in the cocommutative
setting.


\begin{thebibliography}{50}

\bibitem{ABRW}
Alexandre, C.; Bordemann, M.;Rivi\`ere, S.; Wagemann, F..
\emph{Structure theory of rack bialgebras}.
J. Generalized Lie Theory Appl. {\bf 10} (2016), no. 1, Art. ID 1000244, 20 pp.
cf {\tt arXiv:1412.5907} 

\bibitem{ABRWI}
Alexandre, C.; Bordemann, M.;Rivi\`ere, S.; Wagemann, F..
\emph{Algebraic deformation quantization of Leibniz algebras}
accepted in Communications in Algebra. 
cf {\tt arXiv:1412.5907} 

\bibitem{CCES} Carter, J.S.; Crans, A.S.; Elhamdadi, M.;
Saito, M.
\emph{Cohomology of categorical self-distributivity}. 
J. Homotopy Relat. Struct. {\bf 3} (2008), no. 1, 13--63

\bibitem{Alissa} Crans, A; Mukherjee, S.; and
Przytycki, J. \emph{On Homology of
Associative Shelves.} Journal of Homotopy and Related Structures, (2016)

\bibitem{C} Cartier, \emph{A Primer on Hopf Algebras},
Frontiers in number theory, physics, and geometry. II,
537--615, Springer, Berlin, 2007. 

\bibitem{FennRourke} Fenn, Rourke
\emph{Racks and links in codimension 2} 
J. Knot Theory Ramifications 1 (1992), no. 4, 343--406

\bibitem{GS} Gerstenhaber, M.; Schack
\emph{Bialgebra cohomology, deformations, and quantum groups}.
Proc. Natl. Acad. Sci. USA, Vol. {\bf 87} (1990) 478--481

\bibitem{KS}  Klimyk, Schm\"udgen
\emph{Quantum Groups and Their Representations}.
Springer Texts and Monographs in Physics 1997

\bibitem{KW} Kr\"ahmer, Ulrich; Wagemann, Friedrich
\emph{Racks, Leibniz algebras and Yetter-Drinfel'd modules}.
Georgian Math. J. {\bf 22} (2015), no. 4, 529--542

\bibitem{Leb}
Lebed, V.
\emph{Homologies of algebraic structures via braidings and quantum shuffles}. 
J. Algebra {\bf 391} (2013), 152--192

\bibitem{LP}
Loday, J. L.; Pirashvili, T.
\emph{The tensor category of linear maps and Leibniz algebras}.
Georgian Math. J. {\bf 5} (1998), no. 3, 263--276

\bibitem{LPneu} 
Loday, J. L.; Pirashvili, T.
\emph{Universal enveloping algebras of Leibniz algebras and (co)homology}.
 Math. Ann. {\bf 296} (1993) 139--158

\bibitem{V} Vendramin, Nichols algebras,
  
http://mate.dm.uba.ar/$\sim$lvendram/lectures/rauischholzhausen.pdf

\end{thebibliography}
\end{document}